\newtheorem{thm}{Theorem}
\newtheorem{prop}[thm]{Proposition}
\newtheorem{lem}[thm]{Lemma}
\newtheorem{prob}{Problem}
\begin{document}

\title[Explicit inverse Shapiro isomorphism]
{Explicit inverse Shapiro isomorphism \\ and its application}

\author{Andrei V. Zavarnitsine}%
\address{Andrei V. Zavarnitsine
\newline\indent Sobolev Institute of Mathematics,
\newline\indent 4, Koptyug av.
\newline\indent 630090, Novosibirsk, Russia
} \email{zav@math.nsc.ru}

\maketitle

\begin{quote}
\noindent{{\sc Abstract.}
We find an explicit form of the inverse isomorphism from Shapiro's lemma in terms of
inhomogeneous cocycles and apply it to construct special nonsplit coverings of
groups with a unique conjugacy class of involutions.}

\medskip
\noindent{{\sc Keywords:} Automorphism group, nonsplit extension, cohomology}

\medskip
\noindent{\sc MSC2020:
20E22, 
20J06  
}
\end{quote}

\bigskip

\hfill{\sl To Prof. Victor D. Mazurov on his 80th birthday}

\section{Introduction}

Let $G$ be a group, $H\leqslant G$ a subgroup, and let $U$ be a $\mathbb{Z}H$-module.
The co-induced module $W=\operatorname{Coind}_H^G(U)$ is by definition
$\operatorname{Hom}_{\mathbb{Z}H}(\mathbb{Z}G,U)$ viewed as a
right $G$-module.
Shapiro's well-known and useful lemma from homological algebra \cite[Lemma 6.3.2]{Wei94}
states that there is an isomorphism of cohomology groups
$$
  \Theta:\ H^n(G,W)\to H^n(H,U)
$$
for all $n\geqslant 0$. If $H^n$ is expressed as the quotient $Z^n/B^n$ for {\em inhomogeneous}
cocycle and coboundary groups $Z^n$ and $B^n$ then it can be seen that
$\Theta$ is induced from the map on cocycles
$$
\alpha\ \mapsto\ [(h_1,\ldots,h_n)\mapsto\alpha(h_1,\ldots,h_n)(1)]\in Z^n(H,U)
$$
for every $\alpha\in Z^n(G,W)$ and $h_1,\ldots,h_n\in H$. A corresponding map
for the inverse isomorphism $\Theta^{-1}$ is not so obvious.

Let $Y\subseteq G$ be a {\em left} transversal of $H$ in $G$. And let $\pi:G\to H$ be
the projection along~$Y$, i.\,e. $\pi(yh)=h$ for all $y\in Y$, $h\in H$. We prove

\begin{thm}\label{main}
  In the above notation, the inverse Shapiro isomorphism $\Theta^{-1}$
is induced from the map on inhomogeneous cocycles
$$
\beta\ \mapsto\ [(g_1,\ldots,g_n)\mapsto[g_{n+1}\mapsto \beta(h_1,\ldots,h_n)h_{n+1} ]] \in Z^n(G,W)
$$
for every $\beta\in Z^n(H,U)$ and $g_1,\ldots,g_{n+1}\in G$, where the elements
$h_i\in H$
are uniquely determined from the relations
$$
h_ih_{i+1}\ldots h_{n+1} = \pi (g_ig_{i+1}\ldots g_{n+1})
$$
for $i=1,\ldots,n+1$.
\end{thm}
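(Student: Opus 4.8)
The plan is to show that the displayed assignment $\Phi\colon\beta\mapsto\alpha_\beta$ is already defined at the level of inhomogeneous \emph{cochains}, that it is a morphism of cochain complexes, and that composing it with the forward map $\Theta$ returns $\beta$ on the nose; the theorem then follows formally from Shapiro's lemma. Throughout I would record the two properties of the projection that drive everything: first, $\pi(gh)=\pi(g)h$ for all $g\in G$, $h\in H$ (right $H$-equivariance, immediate from $g=yh'\Rightarrow gh=y(h'h)$); and second, $\pi|_H=\operatorname{id}$, which holds under the implicit normalization $1\in Y$ that also makes $\alpha\mapsto[\dots\mapsto\alpha(\dots)(1)]$ represent $\Theta$. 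I would also solve the defining relations explicitly as $h_i=\pi(g_i\cdots g_{n+1})\,\pi(g_{i+1}\cdots g_{n+1})^{-1}$, exhibiting the $h_i$ as a telescoping family — the observation on which the whole computation hinges. Here $U$ is a right $\mathbb{Z}H$-module, $W$ consists of the right $H$-linear maps $f\colon\mathbb{Z}G\to U$ with $f(xh)=f(x)h$, and the right $G$-action is $(f\cdot g)(x)=f(gx)$.

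First I would check that $\alpha_\beta(g_1,\dots,g_n)$ really lies in $W$, i.e. that the function $g_{n+1}\mapsto\beta(h_1,\dots,h_n)h_{n+1}$ is right $H$-linear. Replacing the evaluation argument $g_{n+1}$ by $g_{n+1}h$ with $h\in H$, right $H$-equivariance of $\pi$ gives $\pi(g_i\cdots g_{n+1}h)=\pi(g_i\cdots g_{n+1})h$, so the determined elements are unchanged in slots $i\le n$ while the last slot becomes $h_{n+1}h$; the value therefore changes by $\beta(h_1,\dots,h_n)h_{n+1}h$, exactly the required right translate. This well-definedness is the step I must not skip.

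The main step — and the one I expect to be the real work — is to verify $d_G\alpha_\beta=\alpha_{d_H\beta}$, both sides evaluated on $(g_1,\dots,g_{n+1})$ and then at a further argument $g_{n+2}$. Writing $k_1,\dots,k_{n+2}$ for the elements determined by $(g_1,\dots,g_{n+2})$, I would match the two inhomogeneous coboundary expansions term by term. The leading face (drop $g_1$) reuses the partial products $\pi(g_i\cdots g_{n+2})$ for $i\ge2$ and yields $\beta(k_2,\dots,k_{n+1})k_{n+2}$; each merging face $g_ig_{i+1}$ forces the corresponding determined elements to collapse as $k_ik_{i+1}$ by the telescoping identity above; and the final face, carrying the $W$-action, satisfies $(\alpha_\beta(g_1,\dots,g_n)\cdot g_{n+1})(g_{n+2})=\alpha_\beta(g_1,\dots,g_n)(g_{n+1}g_{n+2})$, whose determined elements are $k_1,\dots,k_n$ together with $k_{n+1}k_{n+2}$ in the last slot, giving $\beta(k_1,\dots,k_n)k_{n+1}k_{n+2}$. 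These three contributions are precisely the three groups of terms in $(d_H\beta)(k_1,\dots,k_{n+1})\,k_{n+2}=\alpha_{d_H\beta}(g_1,\dots,g_{n+1})(g_{n+2})$. Once this identity is in hand, $\Phi$ carries $Z^n(H,U)$ into $Z^n(G,W)$ and $B^n$ into $B^n$, so it descends to a homomorphism $[\Phi]\colon H^n(H,U)\to H^n(G,W)$.

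It remains to identify $[\Phi]$ with $\Theta^{-1}$. I would compute $\Theta\circ\Phi$ directly on cochains: by definition $(\Theta\alpha_\beta)(h_1,\dots,h_n)=\alpha_\beta(h_1,\dots,h_n)(1)$, so I substitute $g_i=h_i\in H$ and the evaluation argument $g_{n+1}=1$. Using $\pi|_H=\operatorname{id}$ and $\pi(1)=1$, the defining relations telescope to $h_i$ in slots $i\le n$ and to $1$ in the last slot, whence $\Theta(\alpha_\beta)=\beta$ identically. Thus $\Theta\circ\Phi=\operatorname{id}$ already on cochains, so $\Theta_*\circ[\Phi]=\operatorname{id}$ on $H^n(H,U)$; since $\Theta$ is an isomorphism by Shapiro's lemma, $[\Phi]$ is its two-sided inverse, as asserted. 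The only genuine obstacle is the bookkeeping in the coboundary comparison, and it is the telescoping form of the $h_i$ that makes each face map fall into place.
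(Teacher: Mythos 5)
Your route is genuinely different from the paper's, and in outline it is sound: the paper spends most of its effort constructing an explicit homotopy equivalence $f,f'$ between the restricted standard resolution of $G$ and the standard resolution of $H$ (two long cancellation computations with chain homotopies $\eta,\eta'$), then combines it with the tensor--hom adjunction; you instead verify directly that the formula defines a cochain map $C^n(H,U)\to C^n(G,W)$ and use the known forward Shapiro isomorphism as a black box, concluding from $\Theta\circ\Phi=\operatorname{id}$ that $[\Phi]=\Theta^{-1}$. Your well-definedness check and the three-way matching of faces in the coboundary comparison are correct as sketched (you should also record that $\alpha_\beta$ is normalised: $g_i=1$ forces $h_i=1$, and $\beta$ is normalised), and the final one-sided-inverse argument is valid.

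However, there is a genuine gap: your proof only covers transversals with $Y\cap H=\{1\}$, whereas the theorem is stated for an arbitrary left transversal, and the paper makes no such normalisation --- indeed, handling $y_0\ne 1$, where $\{y_0\}=Y\cap H$, is precisely what the paper's first homotopy $\eta'$ exists for, and in its application the paper writes ``we may assume $\pi(1)=1$'', showing this is an extra choice rather than implicit. Your parenthetical justification is also off: the forward map $\alpha\mapsto[(h_1,\ldots,h_n)\mapsto\alpha(h_1,\ldots,h_n)(1)]$ does not involve $Y$ at all, so it represents $\Theta$ for any transversal; what needs $1\in Y$ is your computation of $\Theta\circ\Phi$. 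For general $Y$ one has $\pi(h)=y_0^{-1}h$ on $H$, so substituting $g_i=h_i$, $g_{n+1}=1$ gives $h_i'=h_i^{y_0}$ for $i\le n$ and $h_{n+1}'=y_0^{-1}$, whence $\Theta(\Phi(\beta))(h_1,\ldots,h_n)=\beta(h_1^{y_0},\ldots,h_n^{y_0})\,y_0^{-1}$, which is not $\beta(h_1,\ldots,h_n)$; the cochain-level identity $\Theta\circ\Phi=\operatorname{id}$ simply fails. The missing ingredient is the standard fact that conjugation by an element $y_0\in H$ (the compatible pair $h\mapsto h^{y_0}$, $u\mapsto uy_0$) induces the identity on $H^n(H,U)$ --- exactly the content supplied by the paper's $\eta'$-homotopy; note you cannot sidestep this by choosing $1\in Y$, since the theorem asserts the formula for the given $Y$ and its projection $\pi$. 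With that one lemma cited and the computation above, your argument becomes a complete and considerably shorter proof of the full statement.
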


We remark that in some form the explicit inverse Shapiro isomorphism has been determined. For example, in the case of a finite group $G$
the expression in terms of {\em homogeneous} cochains can be found in \cite[Theorem 12.14]{Gui18}, where left modules and left action are used.
We do not see a ready way to translate this to our setting.
Our choice of right group action and modules is more in line with many classic group-theoretic books, e.\,g. the Atlas \cite{Atl85},
as well as conventions in computer software such as {\sf GAP} and {\sf Magma}, whereas we use inhomogeneous cocycles because in
low dimensions they yield constructions of group extensions. It is also not necessary to require that $G$ be finite.

As an application, we use a special case of Theorem 1 for $n=2$ to prove the
following result related to a problem by V.\,D. Mazurov about Suzuki groups, see Section \ref{sec:appl} for details.

\begin{prop}\label{mprop}
Let $G$ be a finite group with a unique conjugacy class of involutions and let $H\leqslant G$
be a subgroup of order $2$.  There is an extension $E$ of a $2$-group $T$ such that all involutions
of $E$ lie in $T$.
\end{prop}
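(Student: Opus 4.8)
The plan is to build the covering from the smallest nonsplit extension of $H$ and transport it to $G$ through Theorem~\ref{main}. Write $H=\langle t\rangle$ and let $U=\mathbb{Z}/2\mathbb{Z}$ be the trivial $H$-module. Since $H^2(H,U)\cong\mathbb{Z}/2\mathbb{Z}$, I would fix an inhomogeneous $2$-cocycle $\beta\in Z^2(H,U)$ representing the nonzero class; its extension of $U$ by $H$ is the cyclic group $\mathbb{Z}/4\mathbb{Z}$, in which $t$ lifts to an element of order $4$. Put $W=\operatorname{Coind}_H^G(U)$, which as an abelian group is $U^{[G:H]}$ and hence an elementary abelian $2$-group. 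Applying Theorem~\ref{main} with $n=2$ yields the explicit cocycle $\alpha=\Theta^{-1}(\beta)\in Z^2(G,W)$, and I let $E$ be the associated extension $1\to T\to E\xrightarrow{\rho}G\to1$ with $T=W$.

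It remains to show that every involution of $E$ lies in $T$. I would first reduce this to a statement about lifts of $t$ alone. If $x\in E$ is an involution with $\rho(x)\neq1$, then $\rho(x)$ is an involution of $G$ and therefore $G$-conjugate to $t$; choosing $g\in G$ with $g^{-1}\rho(x)g=t$, lifting $g$ to $E$, and conjugating $x$, I obtain an involution $y$ with $\rho(y)=t$, so $y\in\rho^{-1}(t)\subseteq E_H$, where $E_H:=\rho^{-1}(H)$. Thus it is enough to prove that no element of the coset $\rho^{-1}(t)$ is an involution. This is exactly the place where the hypothesis of a single class of involutions is used: it lets a computation localized at the one subgroup $H$ control all involutions of $E$.

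For the local computation I would invoke $\Theta\circ\Theta^{-1}=\mathrm{id}$ together with the explicit form of $\Theta$ recalled in the Introduction. Restricting $\alpha$ to $H$ gives the cocycle of the extension $1\to W\to E_H\to H\to1$, and composing it with the evaluation map $\pi_1\colon W\to U$, $\phi\mapsto\phi(1)$, produces the cocycle $(h_1,h_2)\mapsto\alpha(h_1,h_2)(1)=\Theta(\alpha)(h_1,h_2)=\beta(h_1,h_2)$. Hence the extension obtained from $E_H$ by pushing forward along $\pi_1$ has nonzero class in $H^2(H,U)$ and is therefore $\mathbb{Z}/4\mathbb{Z}$, and the pushout provides a homomorphism $E_H\to\mathbb{Z}/4\mathbb{Z}$ that is compatible with the projections onto $H$ and restricts to $\pi_1$ on $W$. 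Any $y\in\rho^{-1}(t)$ maps to a lift of $t$ in $\mathbb{Z}/4\mathbb{Z}$, which has order $4$, so $y^2$ maps to the nonzero involution of $\mathbb{Z}/4\mathbb{Z}$; in particular $y^2\neq1$ and $y$ is not an involution. This yields the claim, and incidentally shows $E$ is a nonsplit covering, since a splitting would embed the involution $t$ of $G$ into $E\setminus T$.

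The step I expect to be the main obstacle is this local identification: checking carefully that restriction to $H$ followed by evaluation at $1$ returns precisely $\beta$, which is where the concrete formula of Theorem~\ref{main} is essential, since an abstract inverse would not let one pin down the pushout as $\mathbb{Z}/4\mathbb{Z}$. Alongside this one must verify the routine points that $W$ is indeed a $2$-group and that the evaluation map $\pi_1$ is an $H$-module homomorphism (automatic here, as $U$ is a trivial module), so that the pushout used in the argument is defined.
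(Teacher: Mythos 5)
Your proposal is correct, and its construction coincides with the paper's (Proposition \ref{mprop2}): the trivial $\mathbb{F}_2H$-module $U$, the cocycle $\beta$ of the nonsplit extension $\mathbb{Z}_4$ of $U$ by $H$, the coinduced $2$-group $T$, the cocycle $\alpha=\Theta^{-1}(\beta)$ supplied by Theorem~\ref{main}, the extension $E$ built from $\alpha$, and the reduction---using the single conjugacy class of involutions---to showing that no element of $E$ lying over the involution $t$ squares to the identity. Where you genuinely diverge is the final verification. The paper argues by direct computation: $(h,f)^2=(1,l)$ with $l=fh+f+\alpha(h,h)$, and $l(1)=f(h)+f(1)+\beta(h^{-1},h)=u\neq 0$, read off from the explicit formula \eqref{ag1g2}. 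You instead push the subextension $E_H=\rho^{-1}(H)$ forward along the evaluation map $\pi_1\colon T\to U$, identify the pushout as $\mathbb{Z}/4\mathbb{Z}$, and conclude by functoriality that every lift of $t$ has order $4$. The two arguments unwind to the same evaluation-at-$1$ computation, but yours is more structural and in fact establishes something you yourself understate: for the verification you do not need the cocycle-level identity $\Theta(\alpha)=\beta$ (the step you call the ``main obstacle'' and where you say the concrete formula of Theorem~\ref{main} is essential), only the class-level identity $[\Theta(\alpha)]=[\beta]\neq 0$ in $H^2(H,U)$, since cohomologous cocycles yield isomorphic pushouts; and that identity follows from the abstract Shapiro isomorphism together with the easy explicit form of the \emph{forward} map quoted in the Introduction. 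So on your route, Theorem~\ref{main} is needed only to have a concrete $\alpha$ from which to build $E$, not to check the involution condition. (The cocycle equality you assert does hold, but it uses the normalisation $\pi(1)=1$ imposed in \eqref{pi1}, or alternatively the fact that conjugation by the element of $Y\cap H$ acts trivially here; your write-up passes over this.) In exchange, the paper's direct computation is shorter and entirely self-contained, whereas your pushout argument better separates what the uniqueness hypothesis, the explicit inverse, and the explicit forward map are each responsible for.
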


The extension $E$ from Proposition \ref{mprop} is constructed explicitly. We show that
one may take for $T$ an induced $\mathbb{F}_2H$-module.

\section{Preliminaries}

For sets $A,B,C$, the (right) composition maps $\alpha:A\to B$ and $\beta:B\to C$ is denoted by $\alpha\beta$.
The image of $a\in A$ under $\alpha$ is denoted by either $\alpha(a)$ or $a\alpha$ depending on the context.
Maps are sometimes written in the form $[a\mapsto a\alpha]$. We use right modules and right group actions.

First, we recall some necessary notions and facts from homological algebra. For more details,
see \cite{Bro82,Ver03,Wei94}.
Given a ring $R$, a sequence of right $R$-modules and homomorphisms
\begin{equation}\label{chc}
\qquad \ldots\stackrel{\delta_{n+1}}{\longrightarrow} A_n
      \stackrel{\delta_n}{\longrightarrow}     A_{n-1}
      \stackrel{\delta_{n-1}}{\longrightarrow}\ldots,
\end{equation}
written $(A,\delta)$ or $A$ for short, is a {\em chain complex} if $\delta_n\delta_{n-1}=0$ for all $n$.
{\em The homology} $H(A)$ of $A$ is the collection of $R$-modules $H_n(A)=Z_n(A)/B_n(A)$,
the {\em  $n$-dimensional homology groups}, where $Z_n=\operatorname{Ker}\delta_n$
and $B_n=\operatorname{Im}\delta_{n-1}$.
The complex $A$ is {\em exact} (at term $n$) if $H(A)=0$ ($H_n(A)=0$, respectively).
A {\em chain map} $f:(A,\delta)\to (B,\gamma)$ of two chain complexes is a set of
$R$-homomorphisms $f_n:A_n\to B_n$ such that $f_n\gamma_n=\delta_n f_{n+1}$ for all $n$.
Such an $f$ induces a map $H(f):H(A)\to H(B)$, i.\,e. $R$-homomorphisms of $n$-dimensional homology groups for all $n$.
We denote by $\operatorname{id}_A$ the obvious identity chain map $A\to A$.
Two chain maps $f,g:(A,\delta)\to (B,\gamma)$ are {\em homotopic} if there is a collection $\eta$
of $R$-homomorphisms $\eta_n: A_n\to B_{n+1}$ such that $f_n-g_n=\eta_n\gamma_{n+1}+ \delta_n \eta_{n-1}$
for all~$n$. In this case, $\eta$ is called a {\em chain homotopy} from $f$ to $g$.
A chain map $f:A\to B$ is a {\em homotopy equivalence} if there is a chain map $f':B\to A$ such that
$ff'$ is chain homotopic to $\operatorname{id}_A$ and $f'f$ is chain homotopic to $\operatorname{id}_B$.

\begin{lem}\cite[Proposition(0.2)]{Bro82}\label{heq}
A homotopy equivalence of complexes induces an isomorphism of their homologies.
\end{lem}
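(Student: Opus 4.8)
The plan is to deduce the statement formally from three properties of the homology functor $H(\cdot)$: it carries the identity chain map to the identity, it respects composition, and---the essential point---it identifies homotopic chain maps. The induced homomorphisms $H(f):H(A)\to H(B)$ are already available from the excerpt, sending the class of a cycle $z\in Z_n(A)$ to the class of $zf_n$; the chain-map relation $f_n\gamma_n=\delta_n f_{n-1}$ guarantees that $zf_n$ is again a cycle and that boundaries go to boundaries, so $H(f)$ is well defined. Functoriality is then immediate: for composable chain maps $f:A\to B$ and $f':B\to C$ one has $(ff')_n=f_nf'_n$, whence $H(ff')=H(f)H(f')$, and plainly $H(\operatorname{id}_A)=\operatorname{id}_{H(A)}$. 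I would record these as routine consequences of the definitions.

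The crux is homotopy invariance: if $f,g:A\to B$ are chain homotopic via $\eta$, then $H(f)=H(g)$. To see this I would take a cycle $z\in Z_n(A)$, so that $z\delta_n=0$, and apply the defining relation $f_n-g_n=\eta_n\gamma_{n+1}+\delta_n\eta_{n-1}$ to $z$:
$$
z(f_n-g_n)=(z\eta_n)\gamma_{n+1}+(z\delta_n)\eta_{n-1}=(z\eta_n)\gamma_{n+1}\in\operatorname{Im}\gamma_{n+1}.
$$
Thus $zf_n$ and $zg_n$ differ by an $n$-boundary of $B$ and represent the same class in $H_n(B)$, giving $H(f)=H(g)$. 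This is the only place where the homotopy is genuinely used, and it is the step to carry out with care.

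It remains to assemble the argument. Let $f:A\to B$ be a homotopy equivalence with homotopy inverse $f':B\to A$, so that $ff'$ is homotopic to $\operatorname{id}_A$ and $f'f$ to $\operatorname{id}_B$. Combining functoriality with homotopy invariance,
$$
H(f)H(f')=H(ff')=H(\operatorname{id}_A)=\operatorname{id}_{H(A)},\qquad H(f')H(f)=H(f'f)=H(\operatorname{id}_B)=\operatorname{id}_{H(B)}.
$$
Hence $H(f)$ has the two-sided inverse $H(f')$ and is therefore an isomorphism, as claimed. The sole obstacle is the homotopy-invariance computation above; the remaining steps are bookkeeping that follows directly from the definitions recalled in the excerpt.
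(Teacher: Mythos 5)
Your proof is correct and complete: the well-definedness of $H(f)$, functoriality, the homotopy-invariance computation $z(f_n-g_n)=(z\eta_n)\gamma_{n+1}\in\operatorname{Im}\gamma_{n+1}$ for a cycle $z$, and the two-sided-inverse conclusion are all sound (you also quietly used the correct chain-map relation $f_n\gamma_n=\delta_nf_{n-1}$, which fixes the paper's typo $f_n\gamma_n=\delta_nf_{n+1}$). The paper itself gives no proof of this lemma---it is quoted from \cite{Bro82}, Proposition~(0.2)---and your argument is precisely the standard one found in that reference, so the two approaches coincide.
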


{\em Cochain complexes, maps, cohomology}, etc. are defined similarly by reversing the arrows in (\ref{chc}) and usually
changing subscripts to superscripts.

When $R=\mathbb{Z}G$ for a group $G$, instead of saying $R$-modules and $R$-ho\-mo\-mor\-phisms,
we will often say $G$-modules, $G$-homomorphisms, etc.

For commutative rings $R$ and $S$, the categories of right (left) $R$-modules and $(R,S)$-bimodules are denoted by
$\operatorname{Mod}$-$R$ ($R$-$\operatorname{Mod}$) and $R$-$\operatorname{Mod}$-$S$, respectively.
The following fundamental result is known as the tensor-hom adjunction.
The explicit form of the isomorphism for left modules is given, e.\,g., in  \cite[Lemma 2.8.2]{Ben98}.
We state it here for right modules.

\begin{lem}\label{thadj}
Given rings $R$ and $S$, let $M\in \operatorname{Mod}$-$R$,
$N\in R$-$\operatorname{Mod}$-$S$, and $K\in \operatorname{Mod}$-$S$. Then there is a canonical isomorphism of abelian groups
$$\operatorname{Hom}(M\otimes N,K) \cong \operatorname{Hom}(M,\operatorname{Hom}(N,K))$$
explicitly given by the mutually inverse maps
\begin{equation}\label{thexpl}
\begin{aligned}
  \lambda&\ \mapsto\ [m\mapsto[n\mapsto\lambda(m\otimes n)]], \\
  \mu&\ \mapsto\  [m\otimes n\mapsto \mu(m)(n)]
\end{aligned}
\end{equation}
for all $\lambda\in \operatorname{Hom}(M\otimes N,K)$, $\mu\in \operatorname{Hom}(M,\operatorname{Hom}(N,K))$, $m\in M$, and $n\in N$.
\end{lem}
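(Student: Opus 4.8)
The plan is to verify directly that the two displayed assignments are well-defined, mutually inverse homomorphisms of abelian groups; write $\Phi$ for the first (sending $\lambda$ to $[m\mapsto[n\mapsto\lambda(m\otimes n)]]$) and $\Psi$ for the second. Throughout I keep careful track of the module structures: $M\otimes N$ carries the right $S$-action $(m\otimes n)s=m\otimes ns$, while $\operatorname{Hom}(N,K)$, the group of right $S$-homomorphisms $N\to K$, becomes a right $R$-module via $(f\cdot r)(n)=f(rn)$, the left $R$-action on $N$ turning into a right action on the Hom-group. These are precisely the structures with respect to which the outer Hom-groups in the statement are formed.

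First I would check that $\Phi$ is well defined. For fixed $m\in M$, the map $n\mapsto\lambda(m\otimes n)$ is a right $S$-homomorphism, since $\lambda(m\otimes ns)=\lambda((m\otimes n)s)=\lambda(m\otimes n)s$ by $S$-linearity of $\lambda$; hence it lies in $\operatorname{Hom}(N,K)$. Next, $m\mapsto[n\mapsto\lambda(m\otimes n)]$ is a right $R$-homomorphism: evaluating $\Phi(\lambda)(mr)$ and $\Phi(\lambda)(m)\cdot r$ at $n$ gives $\lambda(mr\otimes n)$ and $\lambda(m\otimes rn)$ respectively, and these agree because $mr\otimes n=m\otimes rn$ in $M\otimes_R N$. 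Additivity in $m$ is immediate, so $\Phi(\lambda)\in\operatorname{Hom}(M,\operatorname{Hom}(N,K))$.

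The main step, and the only one I expect to require more than bookkeeping, is the well-definedness of $\Psi$. Given $\mu\in\operatorname{Hom}(M,\operatorname{Hom}(N,K))$, consider the map $M\times N\to K$, $(m,n)\mapsto\mu(m)(n)$. It is biadditive, and it is $R$-balanced: $\mu(mr)(n)=(\mu(m)\cdot r)(n)=\mu(m)(rn)$, where the first equality is $R$-linearity of $\mu$ and the second is the definition of the right $R$-action on $\operatorname{Hom}(N,K)$. By the universal property of $\otimes_R$ it therefore factors through a unique group homomorphism $\Psi(\mu)\colon M\otimes_R N\to K$ with $\Psi(\mu)(m\otimes n)=\mu(m)(n)$. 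Finally $\Psi(\mu)$ is $S$-linear, since $\Psi(\mu)((m\otimes n)s)=\mu(m)(ns)=\mu(m)(n)s$ using that $\mu(m)$ is an $S$-homomorphism; hence $\Psi(\mu)\in\operatorname{Hom}(M\otimes N,K)$.

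It then remains to see that $\Phi$ and $\Psi$ are mutually inverse and additive. Both composites are computed on simple tensors and pointwise: $\Psi(\Phi(\lambda))(m\otimes n)=\lambda(m\otimes n)$ and $\Phi(\Psi(\mu))(m)(n)=\mu(m)(n)$, and since simple tensors generate $M\otimes_R N$ while elements of $\operatorname{Hom}(M,\operatorname{Hom}(N,K))$ are determined by their values on pairs $(m,n)$, this yields $\Psi\Phi=\operatorname{id}$ and $\Phi\Psi=\operatorname{id}$. Additivity of each map is clear from the pointwise formulas. Naturality in $M$, $N$, $K$ follows by inspecting the formulas but is not needed here; the only genuine difference from the left-module version of \cite[Lemma 2.8.2]{Ben98} is the side on which the various actions are written, which is exactly the bookkeeping carried out above.
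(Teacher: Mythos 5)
Your proof is correct. Note, however, that the paper does not prove this lemma at all: it is stated as the known tensor--hom adjunction, with a citation to \cite[Lemma 2.8.2]{Ben98} for the explicit left-module version and a remark that the right-module form is a translation of conventions. Your direct verification---checking that $\Phi(\lambda)$ lands in $\operatorname{Hom}_R(M,\operatorname{Hom}_S(N,K))$ using $mr\otimes n=m\otimes rn$, obtaining $\Psi(\mu)$ from the universal property of $\otimes_R$ via $R$-balancedness of $(m,n)\mapsto\mu(m)(n)$, and evaluating both composites on simple tensors---is exactly the routine argument the citation suppresses, and you have done the one piece of genuine bookkeeping correctly, namely that $\operatorname{Hom}_S(N,K)$ carries the \emph{right} $R$-action $(f\cdot r)(n)=f(rn)$ induced by the left $R$-action on $N$, which is what makes $R$-linearity of $\Phi(\lambda)$ and $R$-balancedness of $\Psi$ dual to one another.
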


\section{Inhomogeneous resolution}

Let $G$ be a group and let $X_n$, $n\geqslant 0$, be a right free $G$-module with free generators $(g_1,\ldots,g_n)$, $g_i\in G$, $g_i\ne 1$.
In particular, every element of $X_n$ is a $\mathbb{Z}$-linear combination of elements of the form $(g_1,\ldots,g_n)g_{n+1}$ with $g_i\in G$.
Observe that $X_0\cong \mathbb{Z}G$ is generated by the empty tuple $()$. For convenience, we also assume $(g_1,\ldots,g_n)=0$ if $g_i=1$ for some $i$.
The {\em boundary homomorphism} $d_n:X_n\to X_{n-1}$, $n\geqslant 1$, is given on the generators by
\begin{equation}
\begin{aligned}
\label{d_act}
(g_1,\ldots,g_n)d_n &= (-1)^n(g_2,\ldots,g_n)\\
+\sum_{i=1}^{n-1}(-1)^{n-i}&(g_1,\ldots,g_{i-1},g_ig_{i+1},g_{i+2},\ldots,g_n)+(g_1,\ldots,g_{n-1})g_n
\end{aligned}
\end{equation}
and extended to $X_n$ by $G$-linearity. Similarly, $\varepsilon:X_0\to \mathbb{Z}$ is given by $\varepsilon:()\mapsto 1$. It is verified that
\begin{equation}\label{fr_res}
\ldots \to X_2\stackrel{d_2}{\longrightarrow}{X_1}\stackrel{d_1}{\longrightarrow}{X_0}\stackrel{\varepsilon}{\longrightarrow}\mathbb{Z}\to 0
\end{equation}
is an exact chain complex which is called {\em the free  inhomogeneous (standard, normalised) resolution} for the trivial $G$-module $\mathbb{Z}$ (or {\em resolution for $G$}, for short).

Given a $G$-module $A$, consider the cochain complex of abelian groups
\begin{equation}\label{hcompl}
0\stackrel{\partial_{-1}}{\longrightarrow}\operatorname{Hom}_G(X_0,A)\stackrel{\partial_{0}}{\longrightarrow}\operatorname{Hom}_G(X_1,A)
 \stackrel{\partial_{1}}{\longrightarrow}\ldots
\end{equation}
obtained by applying the left exact contravariant functor $\operatorname{Hom}_G(-,A)$ to the truncated version of~(\ref{fr_res}).
Setting $Z^n(G,A)=\operatorname{Ker}\partial_n$
and $B^n(G,A)=\operatorname{Im}\partial_{n-1}$ we define the $n$th {\em cohomology group},
$n\geqslant 0$, by
$$
H^n(G,A)=Z^n(G,A)/B^n(G,A).
$$

Denote by $C^n(G,A)$ the set of all {\em (inhomogeneous, normalised) $n$-cochains}, i.\,e. all the
maps $\lambda: G^n\to A$, $n\geqslant 0$, such that $\lambda(g_1,\ldots,g_n)=0$ whenever $g_i=1$ for some $i$.
Every such $\lambda$ uniquely defines an element of $\operatorname{Hom}_G(X_n,A)$ and the converse is also true.
In particular, there are isomorphisms of abelian groups
$C^n(G,A)\cong \operatorname{Hom}_G(X_n,A)$
which allow us to rewrite (\ref{hcompl}) as
$$
0\longrightarrow C^0(G,A)\stackrel{\partial_{0}}{\longrightarrow}C^1(G,A)
 \stackrel{\partial_{1}}{\longrightarrow}\ldots,
$$
where $\partial_{n-1}:C^{n-1}(G,A)\to C^n(G,A)$, $n\geqslant 1$, acts by
\begin{multline*}
(\lambda\partial_{n-1})(g_1,\ldots,g_n) = (-1)^n\lambda(g_2,\ldots,g_n)\\
+\sum_{i=1}^n(-1)^{n-i}\lambda(g_1,\ldots,g_{i-1},g_ig_{i+1},g_{i+2},\ldots,g_n)
  +\lambda(g_1,\ldots,g_{n-1})g_n
\end{multline*}
for $\lambda\in C^{n-1}(G,A)$. Thus we may identify $Z^n(G,A)$ and $B^n(G,A)$ with their isomorphic
subgroups of $C^n(C,A)$ which consist of {\em $n$-cocycles} and {\em $n$-coboundaries}, respectively.

It is known \cite[\S 2.4]{Ben98} that if we replace the above standard free resolution $X$
with any projective resolution of $\mathbb{Z}$ then the similarly defined cohomology groups
will be isomorphic to $H^n(G,A)$.

\section{Homotopy of resolutions for a subgroup}

Let $G$ be a group and $H\leqslant G$ a subgroup.
Let $(F,d)$ be the standard free resolution
for $G$ restricted to $H$. Then it is also a free resolution for $H$, nonstandard, in general.
Also, let $(S,d')$ be the standard free resolution for $H$.
We will require an explicit form of the isomorphism of cohomologies
for $H$ obtained from $F$ and $S$.
In order to find it, we determine explicitly a homotopy equivalence of these resolutions, cf. Lemma \ref{heq}.
Namely, we define two maps $f:F\to S$ and
$f':S\to F$ that fit in the following diagram:
\begin{equation}\label{FfS}
\begin{array}{c@{}c@{}c@{}c@{}ccc}
\ldots\longrightarrow&F_1&\stackrel{d_1}{\longrightarrow}&F_0&\stackrel{\varepsilon}{\longrightarrow}&\mathbb{Z}&\longrightarrow 0 \\[5pt]
                     &f_1\Big\downarrow\Big\uparrow f_1'&&f_0\Big\downarrow\Big\uparrow f_0'&&\|& \\
\ldots\longrightarrow&S_1&\stackrel{d_1'}{\longrightarrow}&S_0&\stackrel{\varepsilon'}{\longrightarrow}&\mathbb{Z}&\longrightarrow 0
\end{array}
\end{equation}
Fixing a left transversal $Y$ of $H$ in $G$ we let $\pi:G\to H$ be the projection along $Y$, i.\,e.
such that $\pi(yh)=h$ for all $y\in Y$, $h\in H$. In particular,
\begin{equation}\label{pigh}
  \pi(gh)=\pi(g)h, \quad \text{for all} \ \ g\in G,\ h\in H.
\end{equation}
Let $f'$ be the natural identical embedding $S\to F$ and let $f$ be defined by
\begin{equation}\label{fn}
f_n: (g_1,\ldots,g_n)g_{n+1} \mapsto (h_1,\ldots,h_n)h_{n+1}
\end{equation}
for $n\geqslant 0$ and extended by $\mathbb{Z}$-linearity to the whole of $F_n$, where $h_i\in H$ are uniquely determined from the relations
\begin{equation}\label{hrel}
h_ih_{i+1}\ldots h_{n+1}=\pi(g_ig_{i+1}\ldots g_{n+1}), \qquad i=1,\ldots,n+1.
\end{equation}
In other words, $h_i=\pi(g_i\ldots g_{n+1})\pi(g_{i+1}\ldots g_{n+1})^{-1}$, for $i=1,\ldots,n$, and $h_{n+1}=\pi(g_{n+1})$.

\begin{lem} \label{ffp_ch_m}
$f$ and $f'$ are chain maps.
\end{lem}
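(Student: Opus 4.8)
The plan is to treat $f'$ and $f$ separately, the former being essentially immediate and the latter requiring a term-by-term comparison governed by the relations (\ref{hrel}). Throughout I use the definition of chain map from the preliminaries, so for $f\colon F\to S$ I must check the commuting relation $f_nd_n'=d_nf_{n-1}$ (and the dual one for $f'$), together with compatibility with the augmentations in (\ref{FfS}).

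For $f'$ I would argue as follows. Since $f'$ is the identical embedding $S\to F$, on a free $H$-generator $(h_1,\ldots,h_n)$ of $S_n$ (so $h_i\in H$) the boundary $d_n$ computed in $F$ by (\ref{d_act}) is \emph{literally} the formula defining $d_n'$ in $S$: every product $h_ih_{i+1}$ again lies in $H$, and the normalisation convention (a tuple with a trivial entry is $0$) is the same in both resolutions. Hence $f_n'd_n=d_n'f_{n-1}'$ on generators and therefore on all of $S_n$ by $H$-linearity; the augmentation square commutes because $\varepsilon$ and $\varepsilon'$ both send $()$ to $1$.

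The real content is that $f$ is a chain map. First I would check that each $f_n$ is a genuine $H$-homomorphism (not merely $\mathbb{Z}$-linear) and respects the normalisation, so that it is well defined on the $\mathbb{Z}$-basis consisting of the elements $(g_1,\ldots,g_n)g_{n+1}$. Using the closed form $h_i=\pi(g_i\cdots g_{n+1})\pi(g_{i+1}\cdots g_{n+1})^{-1}$, right multiplication by $h\in H$ replaces $g_{n+1}$ by $g_{n+1}h$, and by (\ref{pigh}) the elements $h_1,\ldots,h_n$ are unchanged while $h_{n+1}$ becomes $h_{n+1}h$; this gives $f_n\big((g_1,\ldots,g_n)g_{n+1}h\big)=f_n\big((g_1,\ldots,g_n)g_{n+1}\big)h$, i.e. $H$-linearity. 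If $g_i=1$ for some $i\le n$ then $g_i\cdots g_{n+1}=g_{i+1}\cdots g_{n+1}$, so $h_i=1$ and the image tuple is $0$, matching the convention.

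For the commuting square I would evaluate both sides on a basis element $v=(g_1,\ldots,g_n)g_{n+1}$ with associated $h_1,\ldots,h_{n+1}$. Expanding $f_n(v)d_n'$ by the $H$-boundary and multiplying by $h_{n+1}$ produces the three kinds of summand $(-1)^n(h_2,\ldots,h_n)h_{n+1}$, the merge terms $(-1)^{n-i}(h_1,\ldots,h_{i-1},h_ih_{i+1},h_{i+2},\ldots,h_n)h_{n+1}$, and $(h_1,\ldots,h_{n-1})h_nh_{n+1}$. Expanding instead $vd_n$ via (\ref{d_act}) and applying $f_{n-1}$ termwise, I would verify that each summand matches. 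By a short telescoping argument from (\ref{hrel}), dropping $g_1$ leaves $h_2,\ldots,h_{n+1}$ intact; merging $g_ig_{i+1}$ leaves $\pi(g_j\cdots g_{n+1})$ unchanged for $j\le i$ and shifts the remaining products, so the tuple assigned by $f_{n-1}$ is exactly $(h_1,\ldots,h_{i-1},h_ih_{i+1},h_{i+2},\ldots,h_n)$ with trailing factor $h_{n+1}$; and absorbing $g_n$ into the trailing element $g_ng_{n+1}$ turns the last entry into $h_nh_{n+1}$ while leaving $h_1,\ldots,h_{n-1}$ fixed. Matching signs then yields $f_nd_n'=d_nf_{n-1}$, and the augmentation square commutes since $f_0\colon ()g_1\mapsto()\pi(g_1)$ preserves $\varepsilon$. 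I expect the only delicate step to be the bookkeeping in the merge term—confirming that the middle entry collapses to $h_ih_{i+1}$ with the surrounding indices shifted correctly—which is a direct consequence of the multiplicativity encoded in (\ref{hrel}) together with (\ref{pigh}).
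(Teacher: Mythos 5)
Your proof is correct, and it is in fact more self-contained than the paper's own argument, so it is worth recording how the two differ. The paper's proof opens by asserting that ``we only need to check that the components of $f$ and $f'$ are $H$-homomorphisms'': commutation with the boundary maps is treated as self-evident and is never written out, and the $H$-linearity of $f_n$ is obtained not by your direct computation but by noting that $F_n$ is a free $H$-module on the elements $(g_1,\ldots,g_n)y$, $y\in Y$, so that the assignment $(g_1,\ldots,g_n)y\mapsto(h_1,\ldots,h_n)$ extends uniquely to an $H$-homomorphism, which is then shown to agree with formula (\ref{fn}) using (\ref{pigh}). Your route to $H$-linearity, via the closed form $h_i=\pi(g_i\cdots g_{n+1})\pi(g_{i+1}\cdots g_{n+1})^{-1}$ and (\ref{pigh}), is an equivalent amount of work and equally valid. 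The genuine difference is your explicit verification of $f_nd_n'=d_nf_{n-1}$: the telescoping facts you invoke (deleting $g_1$ leaves $h_2,\ldots,h_{n+1}$ unchanged; merging $g_ig_{i+1}$ replaces $h_i,h_{i+1}$ by the single entry $h_ih_{i+1}$; absorbing $g_n$ into the trailing element turns the trailing factor into $h_nh_{n+1}$) are exactly the observations that the paper postpones and states only later, inside the proof of the homotopy proposition that follows the lemma, where they are needed again for the sets $\{h_i\}$ and $\{r_i\}$. So your proposal fills in a step the paper leaves implicit; what the paper's version buys is brevity, at the price of an unjustified reduction of the chain-map property to $H$-linearity, whereas yours is complete as it stands.
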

\begin{proof}
We only need to check that the components of $f$ and $f'$ are $H$-ho\-mo\-mor\-phisms. This is clear for $f'$.
Conversely, $F_n$ is freely generated as an $H$-module by the elements $(g_1,\ldots,g_n)y$ for $g_i\in G$,
$y\in Y$. Hence, the map
$$
(g_1,\ldots,g_n)y \mapsto (h_1,\ldots,h_n),
$$
where $h_i$ are determined from (\ref{hrel}) with $g_{n+1}=y$, extends to an $H$-ho\-mo\-mor\-phism~$f_n$.
In particular, for every $h\in H$, we have
$$
f_n: (g_1,\ldots,g_n)yh \mapsto (h_1,\ldots,h_n)h.
$$
This is the required map (\ref{fn}), because (\ref{hrel}) holds with $g_{n+1}=yh$ and $h_{n+1}=h$ in view of (\ref{pigh}).
\end{proof}

Observe that in general $f$ and $f'$ are not mutually inverse. Only if $Y\cap H=\{1\}$
do we have $f'f=\operatorname{id}_S$. But even in this case,  $ff'\ne \operatorname{id}_F$, in general.
However, the next result shows that $f$ and $f'$ are almost as good as mutually inverse.

\begin{prop} $f'f$ is homotopic to $\operatorname{id}_S$ and $ff'$ is homotopic to $\operatorname{id}_F$.
\end{prop}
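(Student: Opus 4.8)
The plan is to avoid building homotopies by hand and instead to invoke the uniqueness-up-to-homotopy half of the comparison theorem (the fundamental lemma of homological algebra, cf. \cite{Bro82}): any two chain maps between projective resolutions that lie over the same map of the augmentation terms are chain homotopic. The whole statement then reduces to the remark that $f'f$, $ff'$, $\operatorname{id}_S$, and $\operatorname{id}_F$ are \emph{all} chain endomorphisms of projective resolutions of the trivial $H$-module $\mathbb{Z}$ lying over $\operatorname{id}_{\mathbb{Z}}$.

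First I would record that both $(S,d')$ and $(F,d)$ are free—hence projective—resolutions of $\mathbb{Z}$ over $\mathbb{Z}H$. For $S$ this is its definition. For $F$, the restriction to $H$ of the standard $G$-resolution (\ref{fr_res}) stays exact, since exactness is a statement about the underlying abelian groups and is unaffected by viewing the modules over $\mathbb{Z}H$; moreover $F_n$ is $H$-free on the generators $(g_1,\ldots,g_n)y$ with $g_i\in G$, $y\in Y$, exactly as used in the proof of Lemma \ref{ffp_ch_m}.

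Next I would check that $f$ and $f'$ are maps of \emph{augmented} complexes, i.e. that they lie over $\operatorname{id}_{\mathbb{Z}}$. For the identical embedding $f'$ this is immediate. For $f$ it suffices to evaluate $f_0$ on a generator: by (\ref{fn})--(\ref{hrel}) with $n=0$ we have $f_0\colon ()g\mapsto ()\pi(g)$, and applying the two augmentations gives $\varepsilon'(()\pi(g))=1=\varepsilon(()g)$, so $f_0\varepsilon'=\varepsilon$ and likewise $f'_0\varepsilon=\varepsilon'$. Consequently the composites lie over $\operatorname{id}_{\mathbb{Z}}\operatorname{id}_{\mathbb{Z}}=\operatorname{id}_{\mathbb{Z}}$, precisely as $\operatorname{id}_S$ and $\operatorname{id}_F$ do.

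With these observations the comparison theorem applies in both directions, and this symmetric application is the one point I would double-check: for $f'f\colon S\to S$ the source $S$ is projective and the target $S$ is acyclic in positive degrees, while for $ff'\colon F\to F$ one needs $F$—and not merely $S$—to be an acyclic complex of projective $H$-modules, which is exactly what the previous paragraph secured. Hence $f'f\simeq\operatorname{id}_S$ and $ff'\simeq\operatorname{id}_F$, as claimed. If one wanted the homotopies explicitly, they could be produced by the usual inductive recipe from a contracting homotopy of $F$; but this is unnecessary for our purposes, since only the \emph{existence} of the homotopies is used—via Lemma \ref{heq} it already yields that $f$ and $f'$ induce mutually inverse isomorphisms on cohomology.
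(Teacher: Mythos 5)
Your proof is correct, but it takes a genuinely different route from the paper. You invoke the uniqueness-up-to-homotopy half of the comparison theorem (the fundamental lemma of homological algebra, cf.\ \cite{Bro82,Wei94}): since $(S,d')$ and $(F,d)$ are both non-negative acyclic complexes of free, hence projective, $\mathbb{Z}H$-modules augmented over $\mathbb{Z}$ (with $F_n$ being $H$-free on the generators $(g_1,\ldots,g_n)y$, $y\in Y$, exactly as in the proof of Lemma~\ref{ffp_ch_m}), and since $f'f$, $ff'$, $\operatorname{id}_S$, $\operatorname{id}_F$ are chain maps lying over $\operatorname{id}_{\mathbb{Z}}$ --- your check $f_0\varepsilon'=\varepsilon$ being the only point not already contained in Lemma~\ref{ffp_ch_m} --- the required homotopies exist. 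You also correctly isolate the asymmetry in the hypotheses of the comparison theorem: projectivity is needed on the source and acyclicity on the target, so the homotopy $ff'\simeq\operatorname{id}_F$ genuinely uses that $F$ is both $H$-projective and exact, not merely that $S$ is. The paper instead constructs the homotopies explicitly --- $\eta'$ in (\ref{etap_act}) and $\eta$ in (\ref{eta_act}), the latter built from the auxiliary map $\rho(g)=\pi(g)g^{-1}$ --- and verifies the identities (\ref{etap}) and (\ref{eta}) by a lengthy term-by-term cancellation. For the paper's stated purpose your argument suffices: Lemma~\ref{heq} and the isomorphisms (\ref{PhiInv}) require only the \emph{existence} of the homotopies, and the explicit formula of Theorem~\ref{main} involves only $f$, $f'$, $\psi$, $\psi'$, never $\eta$ or $\eta'$. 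What the paper's computation buys is explicitness one level deeper: applying $\operatorname{Hom}_H(-,U)$ to $\eta$ and $\eta'$ yields the concrete maps $\mu_\pm$, $\mu'_\pm$ of (\ref{Hetau}), i.e.\ explicit coboundary corrections witnessing that $\varphi\varphi'$ and $\varphi'\varphi$ induce the identity on cohomology, which is in the spirit of the paper's title and would matter in actual cocycle computations, though it is not needed for the proof of Theorem~\ref{main}.
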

\begin{proof} First, we consider the chain map $\psi' = f'f - \operatorname{id}_S$.

By (\ref{pigh}), we have $\pi(h)=y_0^{-1}h$ for $h\in H$, where $\{y_0\}=Y\cap H$.
In particular, if in (\ref{hrel})
all $g_i\in H$, we have $h_i=g_i^{y_0}$, $i=1,\ldots, n$,  and $h_{n+1}=y_0^{-1}g_{n+1}$.
Therefore, $\psi'$ acts on the free $H$-generators of $S$ by
\begin{equation}\label{psip_act}
\psi'_n: (h_1,\ldots,h_n)\mapsto (h_1^{y_0}, \ldots, h_n^{y_0})y_0^{-1}-(h_1,\ldots,h_n).
\end{equation}
We need to find a chain homotopy $\eta'$ from $f'f$ to $\operatorname{id}_S$, i.\,e.
$H$-ho\-mo\-mor\-phisms $\eta'_n:S_n\to S_{n+1}$ that satisfy
\begin{equation}\label{etap}
\psi_n'=\eta_n'd_{n+1}'+d_n'\eta_{n-1}'
\end{equation}
for $n\geqslant 0$, where  $\eta_{-1}':\mathbb{Z}\to S_0$ is the zero map and $d_0'=\varepsilon'$, as can be seen from the diagram
$$
\xymatrix{
&\ar@{}[l]|{\textstyle{\cdots}} \ar[r]^{d_2'}S_2\ar[d]_{\psi'_2} &S_1\ar[r]^{d_1'}\ar[d]_{\psi'_1}\ar@{.>}[dl]_{\eta_1'}
&S_0\ar[r]^{\varepsilon'}\ar[d]_{\psi'_0}\ar@{.>}[dl]_{\eta_0'}&\mathbb{Z}\ar[r]\ar@{.>}[dl]_{0}\ar[d]_0&  0 \\
&\ar@{}[l]|{\textstyle{\cdots}} \ar[r]_{d_2'}S_2       &S_1\ar[r]_{d_1'} &S_0\ar[r]_{\varepsilon'}&\mathbb{Z}\ar[r]& 0
}$$
We define
\begin{equation}\label{etap_act}
\eta_n': (h_1,\ldots,h_n)=\sum_{i=0}^{n}(-1)^{n-i}(h_1^{y_0},\ldots,h_i^{y_0},y_0^{-1},h_{i+1},\ldots,h_n)
\end{equation}
for all $h_i\in H$ and $n\geqslant 0$, and extend this action to $S$ by $H$-linearity.
In particular, $\eta_0'=[()\mapsto(y_0^{-1})]$. From (\ref{d_act}) and (\ref{etap_act}), we have
\begin{multline}\label{eta_d}
(h_1,\ldots,h_n)\eta_n'd_{n+1}'=
\sum_{i=0}^{n}(-1)^{n-i}(h_1^{y_0},\ldots,h_i^{y_0},y_0^{-1},h_{i+1},\ldots,h_n)d_{n+1}'\\
=(-1)^n\big[(-1)^{n+1}(h_1,\ldots,h_n)\stackrel{\text{\color{blue}[5]}}{+}(-1)^n(y_0^{-1}h_1,h_2,\ldots,h_n)\\
\stackrel{\text{\color{blue}[3]}}{+}\sum_{j=1}^{n-1}(-1)^{n-j}(y_0^{-1},h_1,\ldots,h_jh_{j+1},\ldots,h_n)
\stackrel{\text{\color{blue}[2]}}{+}(y_0^{-1},h_1,\ldots,h_{n-1})h_n\big]\\
+\sum_{i=1}^{n-1}(-1)^{n-i}\big[\stackrel{\text{\color{blue}[1]}}{\phantom{+}}(-1)^{n+1}(h_2^{y_0},\ldots,h_i^{y_0},y_0^{-1},h_{i+1},\ldots,h_n)\\
\stackrel{\text{\color{blue}[4]}}{+}\sum_{j=1}^{i-1} (-1)^{n+1-j}(h_1^{y_0},\ldots,h_j^{y_0}h_{j+1}^{y_0},\ldots,h_i^{y_0},y_0^{-1},h_{i+1},\ldots,h_n)\\
\stackrel{\text{\color{blue}[5]}}{+}(-1)^{n+1-i}(h_1^{y_0},\ldots,h_i^{y_0}y_0^{-1},h_{i+1},\ldots,h_n)\\
\stackrel{\text{\color{blue}[5]}}{+}(-1)^{n-i}(h_1^{y_0},\ldots,h_i^{y_0},y_0^{-1}h_{i+1},\ldots,h_n)\\
\stackrel{\text{\color{blue}[3]}}{+}\sum_{j=i+1}^{n-1}(-1)^{n-j}(h_1^{y_0},\ldots,h_i^{y_0},y_0^{-1},h_{i+1},\ldots,h_jh_{j+1},\ldots,h_n)\\
\stackrel{\text{\color{blue}[2]}}{+}(h_1^{y_0},\ldots,h_i^{y_0},y_0^{-1},h_{i+1},\ldots,h_{n-1})h_n\big]
+\big[(-1)^{n+1}(h_2^{y_0},\ldots,h_n^{y_0},y_0^{-1})\\
\stackrel{\text{\color{blue}[4]}}{+}\sum_{j=1}^{n-1}(-1)^{n+1-j}(h_1^{y_0},\ldots,h_j^{y_0}h_{j+1}^{y_0},\ldots,h_n^{y_0},y_0^{-1})\\
\stackrel{\text{\color{blue}[5]}}{-}(h_1^{y_0},\ldots,h_{n-1}^{y_0},h_n^{y_0}y_0^{-1})+(h_1^{y_0},\ldots,h_n^{y_0})y_0^{-1})\big].
\end{multline}

We also have
\begin{multline}\label{d_eta}
(h_1,\ldots,h_n)d_n'\eta_{n-1}'=\\
\big[ (-1)^n(h_2,\ldots,h_n)
+\sum_{i=1}^{n-1}(-1)^{n-i}(h_1,\ldots,h_{i-1},h_ih_{i+1},h_{i+2},\ldots,h_n) \\
  +(h_1,\ldots,h_{n-1})h_n \big ] \eta_{n-1}' = \big[  (-1)^n(h_2,\ldots,h_n)\\
+\sum_{i=1}^{n-1}(-1)^{n-i}(h_1,\ldots,h_ih_{i+1},\ldots,h_n)
+(h_1,\ldots,h_{n-1})h_n \big] \eta_{n-1}' \\
=\stackrel{\text{\color{blue}[1]}}{\phantom{+}}(-1)^n\sum_{i=1}^{n}(-1)^{n-i}(h_2^{y_0},\ldots,h_i^{y_0},y_0^{-1},h_{i+1},\ldots,h_n)\\
\sum_{i=1}^{n-1}(-1)^{n-i}\big[\stackrel{\text{{\color{blue}\scriptsize [3]}}}{\phantom{+}}
\sum_{j=0}^{i-1}(-1)^{n-1-j}(h_1^{y_0},\ldots,h_j^{y_0},y_0^{-1},h_{j+1},\ldots,h_ih_{i+1},\ldots,h_n)\\
\stackrel{\text{{\color{blue}\scriptsize [4]}}}{+}
\sum_{j=i+1}^{n}(-1)^{n-j}(h_1^{y_0},\ldots,h_i^{y_0}h_{i+1}^{y_0},\ldots,h_j^{y_0},y_0^{-1},h_{j+1},\ldots,h_n)\big]\\
\stackrel{\text{{\color{blue}\scriptsize [2]}}}{+}
\sum_{i=0}^{n-1}(-1)^{n-1-i}(h_1^{y_0},\ldots,h_i^{y_0},y_0^{-1},h_{i+1},\ldots,h_{n-1})h_n.
\end{multline}
Expanding the brackets and adding up the right-hand sides of (\ref{eta_d}) and (\ref{d_eta}), we see that the summands
with the same blue marks cancel out. The remaining terms give
$$
(h_1,\ldots,h_n)(\eta_n'd_{n+1}'+d_n'\eta_{n-1}')= -(h_1,\ldots,h_n)+(h_1^{y_0},\ldots,h_n^{y_0})y_0^{-1}\!=(h_1,\ldots,h_n)\psi'_n
$$
by (\ref{psip_act}). Hence, $\eta'$ is the required chain homotopy.

We how consider the chain map $\psi = ff'-\operatorname{id}_F$ which acts on the free $\mathbb{Z}$-generators of $F$ by
\begin{equation}\label{psi_act}
\psi_n: (g_1,\ldots,g_n)g_{n+1}\mapsto (h_1,\ldots, h_n)h_{n+1}-(g_1,\ldots,g_n)g_{n+1},
\end{equation}
$n\geqslant 0$, where the $h_i$'s are determined from (\ref{hrel}). The fact that this is indeed a chain map follows from Lemma \ref{ffp_ch_m}.

Again, we need to find a homotopy $\eta$ from $ff'$ to $\operatorname{id}_F$,
i.\,e. $H$-homomorphisms $\eta_n:F_n\to F_{n+1}$ such that
\begin{equation}\label{eta}
\psi_n=\eta_n d_{n+1}+d_n\eta_{n-1}
\end{equation}
for $n\geqslant 0$, where  $\eta_{-1}:\mathbb{Z}\to F_0$ is the zero map and $d_0=\varepsilon$, as shown in the diagram
$$
\xymatrix{
&\ar@{}[l]|{\textstyle{\cdots}} \ar[r]^{d_2}F_2 &F_1\ar[r]^{d_1}
&F_0\ar[r]^{\varepsilon}&\mathbb{Z}\ar[r]&  0 \\
&\ar@{}[l]|{\textstyle{\cdots}} \ar[r]_{d_2}F_2\ar[u]_{\psi_2} &F_1\ar[r]_{d_1}\ar@{.>}[ul]_{\eta_1}\ar[u]_{\psi_1} &
F_0\ar[r]_{\varepsilon}\ar@{.>}[ul]_{\eta_0}\ar[u]_{\psi_0}&\mathbb{Z}\ar[r]\ar@{.>}[ul]_{0}\ar[u]_0& 0
}
$$
We first define a map $\rho: G\to Y^{-1}$ by $\rho(g)=\pi(g)g^{-1}$ for all $g\in G$. Observe that
\begin{equation}\label{rhogh}
  \rho(gh)=\rho(g), \quad \text{for all} \ \ g\in G,\ h\in H.
\end{equation}
Now, we set
\begin{equation}\label{eta_act}
\eta_n: (g_1,\ldots,g_n)g_{n+1} \mapsto \sum_{i=1}^{n+1}(-1)^{n+1-i}(h_1,\ldots,h_{i-1},r_i,g_i,\ldots,g_n)g_{n+1},
\end{equation}
for $n\geqslant 0$ and all $g_i\in G$, where the $h_i$'s are given by (\ref{hrel}) and
\begin{equation}\label{rrel}
r_i=\rho(g_ig_{i+1}\ldots g_{n+1}), \qquad i=1,\ldots,n+1.
\end{equation}
Definition (\ref{eta_act}) is consistent. Indeed, the $H$-linearity
follows from (\ref{rhogh}) and (\ref{hrel}).
Moreover, if $g_i=1$ for some $i=1,\ldots,n$ then $h_i=1$ by (\ref{hrel}) and
so the right-hand side of (\ref{eta_act}) vanishes.
Therefore, we have $H$-homomorphisms $\eta_n:F_n\to F_{n+1}$, $n\geqslant 0$.
Also note that $\eta$ is not $G$-linear in general despite what may seem from~(\ref{eta_act}). For example, we have
$\eta_0=[\,()g\mapsto(\rho(g))g\,]$, $g\in G$, and if $g\not\in H$ then $\rho(g)\ne \rho(1)$ which yields
$()g\eta_0\ne ()\eta_0g$.

Observe that (\ref{rrel}) and (\ref{hrel}) imply the relations
\begin{equation}\label{hrrg}
  h_ir_{i+1}=r_ig_i,\ \  i=1,\ldots,n; \quad h_{n+1}=r_{n+1}g_{n+1}.
\end{equation}

For arbitrary $g_1,\ldots,g_{n+1}\in G$, using the $G$-linearity of $d_{n+1}$ we have
\begin{multline}\label{getad}
  (g_1,\ldots,g_n)g_{n+1}\eta_n d_{n+1} = \big[ \sum_{i=1}^{n+1}(-1)^{n+1-i}(h_1,\ldots,h_{i-1},r_i,g_i,\ldots,g_n)g_{n+1} \big] d_{n+1}\\
=\Big[ (-1)^n\big[ (-1)^{n+1}(g_1,\ldots,g_n)\stackrel{\text{\color{blue}[5]}}{+}(-1)^n(r_1g_1,g_2,\ldots,g_n)\\
\stackrel{\text{\color{blue}[1]}}{+}\sum_{j=1}^{n-1}(-1)^{n-j}(r_1,g_1,\ldots,g_jg_{j+1},\ldots,g_n)
\stackrel{\text{\color{blue}[4]}}{+}(r_1,g_1,\ldots,g_{n-1})g_n \big]\\
+\sum_{i=2}^{n}(-1)^{n+1-i}\big[
\stackrel{\text{\color{blue}[3]}}{\phantom{+}}(-1)^{n+1}(h_2,\ldots,h_{i-1},r_i,g_i,\ldots,g_n)\\
\stackrel{\text{\color{blue}[2]}}{+}\sum_{j=1}^{i-2}(-1)^{n+1-j}(h_1,\ldots,h_jh_{j+1},\ldots,h_{i-1},r_i,g_i,\ldots,g_n)\\
\stackrel{\text{\color{blue}[5]}}{+}(-1)^{n-i}(h_1,\ldots,h_{i-2},h_{i-1}r_i,g_i,\ldots,g_n)
\stackrel{\text{\color{blue}[5]}}{+}(-1)^{n-i-1}(h_1,\ldots,h_{i-1},r_ig_i,\ldots,g_n)\\
\stackrel{\text{\color{blue}[1]}}{+}\sum_{j=i}^{n-1}(-1)^{n-j}(h_1,\ldots,h_{i-1},r_i,g_i,\ldots,g_jg_{j+1},\ldots,g_n)\\
\stackrel{\text{\color{blue}[4]}}{+}(h_1,\ldots,h_{i-1},r_i,g_i,\ldots,g_{n-1})g_n\big]\\
\stackrel{\text{\color{blue}[3]}}{+}(-1)^{n+1}(h_2,\ldots,h_n,r_{n+1})
\stackrel{\text{\color{blue}[2]}}{+}\sum_{j=1}^{n-1}(-1)^{n+1-j}(h_1,\ldots,h_jh_{j+1},\ldots,h_n,r_{n+1})\\
\stackrel{\text{\color{blue}[5]}}{-}(h_1,\ldots,h_{n-1},h_nr_{n+1})+(h_1,\ldots,h_n)r_{n+1}\Big]g_{n+1}.
\end{multline}

On the other hand, we have
\begin{multline}\label{gde}
(g_1,\ldots,g_n)g_{n+1}d_{n}\eta_{n-1}=\big[(-1)^n(g_2,\ldots,g_n)\\
+\sum_{i=1}^{n-1}(-1)^{n-i}(g_1,\ldots,g_ig_{i+1},\ldots,g_n)  +(g_1,\ldots,g_{n-1})g_n\big]g_{n+1}\eta_{n-1}
\end{multline}
We now make the following observation. The sets $\{h_1,\ldots,h_{n+1}\}$ and $\{r_1,\ldots,r_{n+1}\}$
were defined from $\{g_1,\ldots,g_{n+1}\}$ using (\ref{hrel}) and (\ref{rrel}). It can be readily seen that the corresponding sets defined from
$\{g_2,\ldots,g_{n+1}\}$  coincide with $\{h_2,\ldots,h_{n+1}\}$ and $\{r_2,\ldots,r_{n+1}\}$, and those defined from
$\{g_1,\ldots,g_ig_{i+1},\ldots,g_{n+1}\}$, $i=1,\ldots,n$, coincide with
$\{h_1,\ldots,h_ih_{i+1},\ldots,h_{n+1}\}$ and $\{r_1,\ldots,r_i,r_{i+2},\ldots,\allowbreak r_{n+1}\}$, respectively.
Therefore, we can use (\ref{eta_act}) to go on elaborating the right-hand side of (\ref{gde}) as follows:
\begin{multline}\label{gdec}
=
\stackrel{\text{\color{blue}[3]}}{\phantom{+}}(-1)^n\sum_{i=2}^{n+1}(-1)^{n+1-i}(h_2,\ldots,h_{i-1},r_i,g_i,\ldots,g_n)g_{n+1}\\
+\sum_{i=1}^{n-1}(-1)^{n-i}\big[
\stackrel{\text{\color{blue}[1]}}{\phantom{+}}\sum_{j=1}^{i}(-1)^{n-j}(h_1,\ldots,h_{j-1},r_j,g_j,\ldots,g_ig_{i+1},\ldots,g_n)g_{n+1}\\
\stackrel{\text{\color{blue}[2]}}{+}\sum_{j=i+1}^{n}(-1)^{n-j}(h_1,\ldots,h_ih_{i+1},\ldots,h_j,r_{j+1},g_{j+1},\ldots,g_n)g_{n+1}\big]\\
\stackrel{\text{\color{blue}[4]}}{+}\sum_{i=1}^{n-1}(h_1,\ldots,h_{i-1},r_i,g_i,\ldots,g_{n-1})g_ng_{n+1}
\stackrel{\text{\color{blue}[4]}}{+}(h_1,\ldots,h_{n-1},r_n)g_ng_{n+1}.
\end{multline}

Upon expanding the brackets we sum up the left- and right-hand sides of (\ref{getad}) and (\ref{gdec}).
Using relations (\ref{hrrg}) we see that the summands with the same blue marks cancel out. All that is left is
the following:
\begin{multline*}
(g_1,\ldots,g_n)g_{n+1}(\eta_n d_{n+1}+d_{n}\eta_{n-1})=-(g_1,\ldots,g_n)g_{n+1}+(h_1,\ldots,h_n)h_{n+1}\\
=(g_1,\ldots,g_n)g_{n+1}\psi_n.
\end{multline*}
This shows that $\eta$ is the required chain homotopy. \end{proof}

The homotopy relations (\ref{etap}) and (\ref{eta}) can be uniformly written as

\begin{equation}\label{etau}
  f'f - \operatorname{id}_S = \eta'd_+'+d'\eta_-' \quad \text{and} \quad
  ff' - \operatorname{id}_F = \eta d_+ +d\eta_-,
\end{equation}
where the subscript $+$ ($-$) means dimension shift by $+1$ ($-1$).

Now, given an arbitrary $H$-module $U$, we may apply the functor $\operatorname{Hom}_H(-,U)$, which
is additive and contravariant, to the truncated version of (\ref{FfS}) and to (\ref{etau}). This gives
\begin{equation}\label{HFfS}
\begin{array}{cccccc}
\ldots\stackrel{\partial_1}{\longleftarrow}&\operatorname{Hom}_H(F_1,U)                     &\stackrel{\partial_0}{\longleftarrow}&\operatorname{Hom}_H(F_0,U)&\longleftarrow& 0 \\[5pt]
                                           &\varphi_1\Big\uparrow\Big\downarrow \varphi_1'&                                      &\varphi_0\Big\uparrow\Big\downarrow \varphi_0' \\
\ldots\stackrel{\partial_1'}{\longleftarrow}&\operatorname{Hom}_H(S_1,U)                     &\stackrel{\partial_0'}{\longleftarrow}&\operatorname{Hom}_H(S_0,U)&\longleftarrow& 0
\end{array}
\end{equation}
and
\begin{equation}\label{Hetau}
  \varphi\varphi' - \operatorname{id}_{\operatorname{Hom}_H(S,U)} = \partial'\mu_+'+\mu'\partial_-' \quad \text{and} \quad
  \varphi'\varphi - \operatorname{id}_{\operatorname{Hom}_H(F,U)} = \partial\mu_++\mu\partial_-,
\end{equation}
where we denoted
$$
\partial^(\mbox{}'\mbox{}^)_-=\operatorname{Hom}_H(d^(\mbox{}'\mbox{}^),U), \quad
\varphi^(\mbox{}'\mbox{}^)=\operatorname{Hom}_H(f^(\mbox{}'\mbox{}^),U), \quad
\mu^(\mbox{}'\mbox{}^)_+=\operatorname{Hom}_H(\eta^(\mbox{}'\mbox{}^),U), \quad
$$
and ${}^(\mbox{}'\mbox{}^)$ means that $'$ is either present or omitted on both sides.
Relations (\ref{Hetau}) imply that the top and bottom cochain complexes in (\ref{HFfS}) are (co)homotopy equivalent.
Lemma \ref{heq} implies that their cohomology groups are isomorphic via the mutually inverse isomorphisms

\begin{equation}\label{PhiInv}
\xymatrix{
\operatorname{Ker}\partial/\operatorname{Im}\partial_- \ar@<-.5ex>@/^/[rr]|{\,\Phi'} &&
\operatorname{Ker}\partial'/\operatorname{Im}\partial_-' \ar@<-.5ex>@/^/[ll]|{\,\Phi\,}
}
\end{equation}
induced by $\varphi'$ and $\varphi$. In view of (\ref{fn}), we can write explicitly
\begin{equation}\label{phiexpl}
\varphi_n=[\beta \mapsto f_n\beta] =  [\beta \mapsto [(g_1,\ldots,g_n)g_{n+1}\mapsto \beta(h_1,\ldots,h_n)h_{n+1}]],
\end{equation}
$n\geqslant 0$, for all $\beta \in \operatorname{Hom}_H(S_n,U)$ and $g_i\in G$, $i=1,\ldots,n+1$;
and note that $\varphi'_n$ sends $\beta' \in \operatorname{Hom}_H(F_n,U)$ to its restriction
to $S_n\leqslant F_n$.

\section{Main proof}

In this section, we prove Theorem \ref{main}.

Let $G$ be a group, $H\leqslant G$ a subgroup, and let $U$ be a $\mathbb{Z}H$-module.
The co-induced module $W=\operatorname{Hom}_{H}(\mathbb{Z}G,U)$ is by definition the set
$$
\{\,f:G\to U\mid f(gh)=f(g)h, \ \ \forall\  g\in G, h\in H\,\}
$$
which becomes a right $G$-module with respect to the action
$$
(fa)(g)=f(ag), \quad \forall\  a,g\in G.
$$

Consider the standard free inhomogeneous resolution for $G$
\begin{equation}\label{xresg}
(X,l):\qquad \ldots \stackrel{l_2}{\longrightarrow}{X_1} \stackrel{l_1}{\longrightarrow}{X_0} \longrightarrow \mathbb{Z} \to 0
\end{equation}
whose restriction $(F,d)$ to $H$ is a free resolution for $H$, where $F=\operatorname{res}_H^G(X)=X\otimes_G \mathbb{Z}G$, $d=\operatorname{res}_H^G(l)$, and
we view $\mathbb{Z}G$ as a natural ($\mathbb{Z}G$,$\mathbb{Z}H$)-bimodule.
Applying to $F$ the functor $\operatorname{Hom}_H(-,U)$ gives the complex
\begin{equation}\label{hfu}
0\longrightarrow\operatorname{Hom}_H(F_0,U)\stackrel{\partial_{0}}{\longrightarrow}\operatorname{Hom}_H(F_1,U)
 \stackrel{\partial_{1}}{\longrightarrow}\ldots
\end{equation}
Since $F_i=\operatorname{res}_H^G(X)=X_i\otimes_G \mathbb{Z}G$ for $i\geqslant 0$, we may apply Lemma (\ref{thadj}) to have the
mutually inverse isomorphisms
\begin{equation}\label{psiinv}
\xymatrix{
\operatorname{Hom}_H(F,U) \ar@<-.5ex>@/^/[rr]|{\,\psi} &&
\operatorname{Hom}_G(X,W) \ar@<-.5ex>@/^/[ll]|{\,\psi'\,}.
}
\end{equation}
The explicit form (\ref{thexpl})
of these isomorphisms
can be rewritten in our case as
\begin{equation}\label{hgexpl}
\begin{aligned}
  \psi:\ \ \lambda&\ \mapsto\ [(g_1,\ldots,g_i)\mapsto[g_{i+1}\mapsto\lambda((g_1,\ldots,g_i)g_{i+1})]], \\
  \psi':\ \ \mu&\ \mapsto\  [(g_1,\ldots,g_i)g_{i+1}\mapsto \mu(g_1,\ldots,g_i)(g_{i+1})]
\end{aligned}
\end{equation}
for all $\lambda\in \operatorname{Hom}_H(F_i,U)$, $\mu\in \operatorname{Hom}_G(X_i,W)$, $g_1,\ldots, g_{i+1}\in G$.

\begin{lem}\label{pschm}
The homomorphism $\psi$ and $\psi'$ in $(\ref{psiinv})$ are (co)chain maps between the cochain
complexes $\operatorname{Hom}_H(F,U)$ and $\operatorname{Hom}_G(X,W)$.
\end{lem}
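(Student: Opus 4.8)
The plan is to reduce the statement to a single intertwining identity and then verify it by evaluating on free generators. Recall that the coboundary on $\operatorname{Hom}_H(F,U)$ is $\operatorname{Hom}_H(d_{n+1},U)$, i.e. the precomposition $\lambda\mapsto d_{n+1}\lambda$, and that on $\operatorname{Hom}_G(X,W)$ is $\operatorname{Hom}_G(l_{n+1},W)$, i.e. $\mu\mapsto l_{n+1}\mu$. Hence, to prove that $\psi$ is a cochain map it suffices to show, for every $n\geqslant 0$ and every $\lambda\in\operatorname{Hom}_H(F_n,U)$, the identity
$$\psi_{n+1}(d_{n+1}\lambda) = l_{n+1}(\psi_n\lambda)\in\operatorname{Hom}_G(X_{n+1},W).$$
The crucial structural remark is that $F=\operatorname{res}_H^G(X)=X\otimes_G\mathbb{Z}G$ is, as a graded abelian group, just $X$ with the restricted $H$-action, and that $d=\operatorname{res}_H^G(l)$ coincides with $l$ as a map of abelian groups. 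In particular $d_{n+1}$ is given by the very same formula (\ref{d_act}) as $l_{n+1}$, and, although $d_{n+1}$ is only asserted to be $H$-linear, the underlying map is $G$-linear because $l_{n+1}$ is.

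I would then evaluate both sides on a free generator $(g_1,\ldots,g_{n+1})$ of $X_{n+1}$; since the values lie in $W=\operatorname{Hom}_H(\mathbb{Z}G,U)$, I further evaluate at an element $g_{n+2}\in G$. For the left-hand side, the explicit formula (\ref{hgexpl}) for $\psi_{n+1}$ gives $\psi_{n+1}(d_{n+1}\lambda)(g_1,\ldots,g_{n+1})(g_{n+2}) = (d_{n+1}\lambda)((g_1,\ldots,g_{n+1})g_{n+2})=\lambda\big(((g_1,\ldots,g_{n+1})g_{n+2})d_{n+1}\big)$, and $G$-linearity of the underlying boundary lets me pull $g_{n+2}$ out, so that this equals $\lambda\big(((g_1,\ldots,g_{n+1})l_{n+1})g_{n+2}\big)$. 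For the right-hand side, $l_{n+1}(\psi_n\lambda)$ evaluated at $(g_1,\ldots,g_{n+1})$ is $(\psi_n\lambda)\big((g_1,\ldots,g_{n+1})l_{n+1}\big)$, an element of $W$ still to be evaluated at $g_{n+2}$. Expanding $(g_1,\ldots,g_{n+1})l_{n+1}$ by (\ref{d_act}) and using that $\psi_n\lambda$ is $G$-linear, each summand matches the corresponding summand produced by $\lambda$ applied to $((g_1,\ldots,g_{n+1})l_{n+1})g_{n+2}$.

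The step I expect to be the main obstacle --- really the only place where something could go wrong --- is the trailing term $(g_1,\ldots,g_n)g_{n+1}$ of the boundary, where the last argument enters as a right module factor rather than as a tuple entry. Here one must combine the $G$-linearity of $\psi_n\lambda$ with the definition of the $G$-action on $W$: one has $(\psi_n\lambda)((g_1,\ldots,g_n)g_{n+1})=(\psi_n\lambda)(g_1,\ldots,g_n)\cdot g_{n+1}$, and evaluating at $g_{n+2}$ turns this, via $(fa)(g)=f(ag)$, into $(\psi_n\lambda)(g_1,\ldots,g_n)(g_{n+1}g_{n+2})=\lambda((g_1,\ldots,g_n)g_{n+1}g_{n+2})$ by (\ref{hgexpl}), which is exactly the trailing term produced on the left. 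Once this bookkeeping is checked, both sides equal $\lambda\big(((g_1,\ldots,g_{n+1})l_{n+1})g_{n+2}\big)$, establishing the intertwining identity for $\psi$.

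Finally, since by Lemma \ref{thadj} each component $\psi_n$ is an isomorphism with inverse $\psi'_n$, the fact that $\psi$ is a cochain map immediately yields that $\psi'=\psi^{-1}$ is one as well: conjugating the commuting squares for $\psi$ by the isomorphisms $\psi_n$ produces the commuting squares for $\psi'$. Alternatively, one may run the symmetric computation directly from the second formula in (\ref{hgexpl}). Conceptually, the whole lemma is an instance of the naturality of the tensor--hom adjunction of Lemma \ref{thadj} in its first module argument, applied to the $G$-maps $l_{n+1}\colon X_{n+1}\to X_n$; the explicit verification above merely renders this naturality concrete in the present notation.
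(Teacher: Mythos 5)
Your proof is correct and follows essentially the same route as the paper: reduce to the intertwining identity, evaluate on free generators of $X_{n+1}$ and at an extra element $g_{n+2}$, and use the $G$-linearity of the boundary (since $d=\operatorname{res}_H^G(l)$) together with the explicit adjunction formulas (\ref{hgexpl}) and the $W$-action. Your extra bookkeeping for the trailing term and the explicit remark that $\psi'$ inherits the cochain-map property as the levelwise inverse are points the paper leaves implicit, but the substance is identical.
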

\begin{proof}
All we need to show is that the following diagram is commutative
\begin{equation}\label{HFpsiS}
\begin{array}{cccccc}
\ldots\stackrel{\partial_1}{\longleftarrow}&\operatorname{Hom}_H(F_1,U)                     &\stackrel{\partial_0}{\longleftarrow}&\operatorname{Hom}_H(F_0,U)&\longleftarrow& 0 \\[5pt]
                                           &\psi_1\Big\downarrow\Big\uparrow \psi_1'&                                      &\psi_0\Big\downarrow\Big\uparrow \psi_0' \\
\ldots\stackrel{\delta_1}{\longleftarrow}&\operatorname{Hom}_G(X_1,W)                     &\stackrel{\delta_0}{\longleftarrow}&\operatorname{Hom}_G(X_0,W)&\longleftarrow& 0
\end{array}
\end{equation}
where $\partial = \operatorname{Hom}_H(d,U)$ and $\delta = \operatorname{Hom}_G(l,W)$.
This follows from the fact that, for every $\alpha\in \operatorname{Hom}_H(F_i,U)$, we have the equality $\alpha\partial_i\psi_{i+1}=\alpha\psi_i\delta_i$
in $\operatorname{Hom}_G(X_{i+1},W)$. Indeed, $\alpha\partial_i\psi_{i+1}$ sends
\begin{multline*}
(g_1,\ldots,g_{i+1})\ \ \mapsto\ \ [g_{i+2}\mapsto(\alpha\partial_i)((g_1,\ldots,g_{i+1})g_{i+2})]\\
=[g_{i+2}\mapsto\alpha((g_1,\ldots,g_{i+1})g_{i+2}d_{i+1})]=\ [g_{i+2}\mapsto\alpha((g_1,\ldots,g_{i+1})d_{i+1}g_{i+2})],
\end{multline*}
while $\alpha\psi_i\delta_i$ sends
\begin{multline*}
(g_1,\ldots,g_{i+1})\ \ \mapsto\ \ (\alpha\psi_i)((g_1,\ldots,g_{i+1})l_{i+1})=\\
=\ [g_{i+2}\mapsto\alpha((g_1,\ldots,g_{i+1})l_{i+1}g_{i+2})]
\end{multline*}
for all $g_k\in G$. The results are the same, since $d=\operatorname{res}_H^G(l)$.
\end{proof}

Lemma \ref{pschm} implies that there are mutually inverse isomorphisms
\begin{equation}\label{PsiInv}
\xymatrix{
\operatorname{Ker}\delta/\operatorname{Im}\delta_- \ar@<-.5ex>@/^/[rr]|{\,\Psi\,} &&
\operatorname{Ker}\partial/\operatorname{Im}\partial_- \ar@<-.5ex>@/^/[ll]|{\,\Psi'\,}
}
\end{equation}
between the cohomology groups resulting from the complexes
$\operatorname{Hom}_H(F,U)$ and $\operatorname{Hom}_G(X,W)$
which are induced by the maps $\psi$ and $\psi'$.
We also know from the previous section that the cohomology groups of $\operatorname{Hom}_H(F,U)$
are isomorphic to those of $\operatorname{Hom}_H(S,U)$, where $S$ is the standard free resolution for $H$,
the isomorphism being induced by the chain maps $\varphi$ and $\varphi'$ given in
(\ref{phiexpl}). Combining (\ref{PhiInv}) and (\ref{PsiInv}) gives the Shapiro isomorphisms
\begin{equation}\label{ThetaInv}
\xymatrix{
\operatorname{Ker}\delta/\operatorname{Im}\delta_- \ar@<-.5ex>@/^/[rr]|{\,\Theta\,} &&
\operatorname{Ker}\partial'/\operatorname{Im}\partial'_- \ar@<-.5ex>@/^/[ll]|{\,\Theta^{-1}\,},
}
\end{equation}
where $\Theta=\Psi'\Phi'$ and $\Theta^{-1}=\Phi\Psi$. The latter is therefore induced by the composition $\phi\psi$ and, by (\ref{phiexpl}) and (\ref{hgexpl}), is expressed explicitly as
\begin{align*}
\beta\ &\stackrel{\phi_n}{\mapsto}\ [(g_1,\ldots,g_n)g_{n+1}\mapsto\beta(h_1,\ldots,h_n)h_{n+1}]\\
       &\stackrel{\psi_n}{\mapsto}\ [(g_1,\ldots,g_n)\mapsto [g_{n+1}\mapsto\beta(h_1,\ldots,h_n)h_{n+1}]]\in \operatorname{Ker}\delta_n=Z^n(G,W)
\end{align*}
for every $\beta\in \operatorname{Ker}\partial'_n=Z^n(H,U)$, as required. The proof of Theorem \ref{main} is complete.

\section{An application}\label{sec:appl}

In this section, we apply the previous results to solve constructively the following

\begin{prob}[V.\,D. Mazurov]\label{prVDM}
Does there exist an extension $E$ of a finite $2$-group $T$
by a Suzuki group $Sz(q)$ for some $q>2$ such that all involutions from $E$ lie in~$T$?
\end{prob}

To this end, we prove the next generalisation.

\begin{prop}\label{mprop2}
Let $G$ be a finite group with a unique conjugacy class of involutions and let $H\leqslant G$
be a subgroup of order $2$.  Let $U$ be the principal $\mathbb{F}_2H$-module and let $T=U^G$. Then there
is an extension $E$ of $T$ by $G$ such that all involutions of $E$ lie in $T$.
\end{prop}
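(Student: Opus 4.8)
The plan is to realise $E$ as the extension corresponding to the image, under the inverse Shapiro map $\Theta^{-1}$ of Theorem \ref{main} with $n=2$, of the unique nontrivial class in $H^2(H,U)$. Write $H=\langle z\rangle$ with $z$ an involution and let $U=\mathbb{F}_2$ be the trivial (principal) module, so that $T=U^G=\operatorname{Coind}_H^G(U)$ is identified with the space of functions $G\to\mathbb{F}_2$ constant on the left cosets of $H$; this is the permutation $\mathbb{F}_2G$-module $\mathbb{F}_2[G/H]$ on which $a\in G$ acts by $(fa)(g)=f(ag)$. Since $H\cong\mathbb{Z}/2$ we have $H^2(H,U)\cong\mathbb{F}_2$, and I take the nonzero class represented by the inhomogeneous cocycle $\beta$ with $\beta(z,z)=1$ (the class of the extension $\mathbb{Z}/4$). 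Applying the explicit formula of Theorem \ref{main} produces a $2$-cocycle $\gamma=\Theta^{-1}(\beta)\in Z^2(G,T)$, and I let $E$ be the extension $1\to T\to E\to G\to 1$ determined by $[\gamma]$.

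Next I reformulate the conclusion cohomologically. An element of $E$ lying over $g\in G$ can be an involution only if $g^2=1$; elements over $1$ already lie in $T$ (which is elementary abelian), so it suffices to show that no involution $g$ of $G$ lifts to an involution of $E$. The preimage of $\langle g\rangle$ in $E$ is an extension of $T$ by $\langle g\rangle\cong\mathbb{Z}/2$ classified by $\operatorname{res}_{\langle g\rangle}[\gamma]$, and a standard computation with a lift $e$ of $g$, using $e^2=\gamma(g,g)$ and $(te)^2=t^{1+g}e^2$, shows that $g$ lifts to an involution if and only if $\gamma(g,g)\in\operatorname{Im}(1+g)$, i.e. if and only if this restricted class vanishes in $H^2(\langle g\rangle,T)=\ker(1+g)/\operatorname{Im}(1+g)$ (here $1+g$ is the norm, which in characteristic $2$ also equals $g-1$). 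Because $G$ has a single class of involutions, every involution is conjugate to $z$, and restriction to conjugate subgroups differs only by the conjugation isomorphism on cohomology; hence it is enough to prove $\operatorname{res}_H[\gamma]\neq 0$, equivalently $c:=\gamma(z,z)\notin\operatorname{Im}(1+z)$ in $T$.

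Finally I compute $c$ from Theorem \ref{main}. Setting $g_1=g_2=z$, the determining relations give $h_1=h_2=\pi(zg_3)\pi(g_3)^{-1}$, and since the action of $U$ is trivial, $c$ is the function with $c(g_3)=\beta(h_1,h_2)$, which equals $1$ exactly when $\pi(zg_3)\neq\pi(g_3)$; in particular $c$ is constant on each $z$-orbit of cosets, so $c\in\ker(1+z)$ as expected. The decisive observation is the behaviour on the $z$-fixed cosets, i.e. those $gH$ with $g\in C_G(z)$: for such a coset one may take the representative in $C_G(z)$, whence $zg=gz$ and $\pi(zg)=z\neq 1=\pi(g)$, giving $c=1$ on every fixed coset (there is at least one, namely $H$ itself). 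On the other hand $\operatorname{Im}(1+z)$ is spanned by the indicators $e_\omega+e_{z\omega}$ of the $2$-cycles of $z$ on $G/H$, and every such element vanishes on the fixed cosets. Since $c$ is nonzero there, $c\notin\operatorname{Im}(1+z)$, so $\operatorname{res}_H[\gamma]\neq 0$ and the proposition follows. I expect the main work to be this last step --- extracting $c$ from the explicit inverse Shapiro cocycle and pinning down $\operatorname{Im}(1+z)$ inside $\mathbb{F}_2[G/H]$ precisely enough to separate $c$ from it; the reduction to the single subgroup $H$ via the conjugacy hypothesis is the conceptual heart that makes the local computation at $z$ sufficient.
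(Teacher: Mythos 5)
Your proposal is correct, and its first half --- choosing $\beta$ with $\beta(z,z)=u$ to represent the $\mathbb{Z}/4$ class and transporting it to $\gamma=\Theta^{-1}(\beta)\in Z^2(G,T)$ via Theorem \ref{main} --- is exactly what the paper does. Where you diverge is in verifying that no involution of $G$ lifts to an involution of $E$. The paper argues inside the group $E$: any involution of $E$ outside $T$ can be conjugated, within $E$, to one of the form $(h,f)$, and a two-line computation gives $(h,f)^2=(1,l)$ with $l(1)=f(h)+f(1)+\beta(h^{-1},h)=u\neq 0$. You instead phrase liftability cohomologically: $g$ lifts to an involution iff $\gamma(g,g)\in\operatorname{Im}(1+g)$, i.e.\ iff $\operatorname{res}_{\langle g\rangle}[\gamma]=0$ in $H^2(\langle g\rangle,T)=\ker(1+g)/\operatorname{Im}(1+g)$; you reduce to $g=z$ via compatibility of restriction with conjugation on cohomology (rather than by conjugating inside $E$); and you separate $c=\gamma(z,z)$ from $\operatorname{Im}(1+z)$ by a support argument on the $z$-fixed cosets of $G/H$. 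At bottom the two computations coincide --- the paper's evaluation at $1\in G$ is your evaluation at the fixed coset $H$, and the cancellation $f(h)+f(1)=0$ is precisely the statement that $\operatorname{Im}(1+h)$ vanishes there --- but your packaging buys a structural explanation (the norm image vanishes on fixed points, $c$ equals $u$ on all of them, and fixed points exist since $H$ is one), at the cost of invoking two standard facts (the norm description of $H^2$ of a cyclic group of order $2$, and triviality of the conjugation action on $H^*(G,T)$) that the paper's bare-hands argument avoids. One small slip to repair: for a fixed coset $gH$ with $g\in C_G(z)$ you write $\pi(zg)=z\neq 1=\pi(g)$, which presumes $g\in Y$; either note that the $Y$-representative of a fixed coset automatically lies in $C_G(z)$ (the whole coset does, since $H\leqslant C_G(z)$), or simply use $\pi(zg)=\pi(gz)=\pi(g)z\neq\pi(g)$, which requires no choice of representative.
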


\begin{proof}
Since $G$ is finite, we may identify $T$ with $\operatorname{coind}_H^G(U)$ which is the set
\begin{equation}\label{tdef}
\{\,f:G\to U\mid f(gh)=f(g)\ \forall g\in G,h \in H\,\}
\end{equation}
with a natural structure of an $\mathbb{F}_2$-space and the action of $a\in G$ given by
\begin{equation}\label{fag}
(fa)(g)=f(ag)
\end{equation}
for all $a,g\in G$. By Shapiro's lemma, there is an isomorphism
$$
  \Theta:\ H^n(G,T)\to H^n(H,U).
$$
Let $U=\{0,u\}$ and $H=\{1,h\}$. Define $\beta:H\times H\to U$ as follows:
\begin{equation}\label{bdef}
\beta(h_1,h_2)=\left\{
\begin{array}{rl}
  u, & \text{if}\ \ h_1=h_2=h;\\
  0, & \text{otherwise}.
\end{array}
\right.
\end{equation}
It is readily checked that
$$
\beta(h_2,h_3)-\beta(h_1h_2,h_3)+\beta(h_1,h_2h_3)-\beta(h_1,h_2)=0
$$
for all $h_1,h_2,h_3\in H$; i.\,e., $\beta\in Z^2(H,U)$. Observe that $\beta\not\in B^2(H,U)$, which can be verified either by definition or
by noting that $\beta$ defines the nonsplit extension
$$
0\to U \to \mathbb{Z}_4 \to H \to 1.
$$
In particular $\beta+B^2(H,U)$ is a nonzero element of $H^2(H,U)$.

Let $Y$ be a left transversal of $H$ in $G$ and let $\pi:G\to H$
be the projection along~$Y$; i.\,e., $\pi(yh)=h$ for all $y\in Y$, $h\in H$. In particular,
\begin{equation}\label{pgh}
  \pi(gh)=\pi(g)h
\end{equation}
for all $g\in G,\ h\in H$. We may assume that
\begin{equation}\label{pi1}
  \pi(1)=1.
\end{equation}
Let $\alpha\in Z^2(G,T)$ be the image of $\beta$ under the map defined in Theorem~\ref{main} which induces
the inverse Shapiro isomorphism $\Theta^{-1}$. Explicitly, we have
\begin{equation}\label{ag1g2}
\alpha(g_1,g_2) = [g\mapsto \beta(\pi(g_1g_2g)\pi(g_2g)^{-1},\pi(g_2g)\pi(g)^{-1})]
\end{equation}
for all $g_1,g_2,g\in G$. Observe that $\alpha+B^2(G,T)$ is a
nonzero element of $H^2(G,T)$ and so $\alpha$ defines a certain nonsplit extension
$$
0\to T \to E\to G \to 1.
$$
This extension can be defined as the set
$$
E=\{(g,f)\mid g\in G, f\in T\}
$$
with the multiplication
$$
(g_1,f_1)(g_2,f_2)=(g_1g_2,f_1g_2+f_2+\alpha(g_1,g_2))
$$
for all $g_1,g_2\in G$, $f_1,f_2\in T$.
This is indeed a group, because $\alpha$ is a normalised $2$-cocycle. We will identify $T$ with the
set of pairs $\{(1,f)\mid f\in T\}\subseteq E$.

It remains to see that every involution of $E$ lies in $T$. Since all involutions of $G$ are conjugate (to $h\in H$),
it suffices to show that $(h,f)^2\ne 1$ for all $f\in T$. We have $(h,f)^2=(1,l)$, where $l=fh+f+\alpha(h,h)\in T$.
We show that $l(1)\ne 0$, which will yield $l\ne 0$ as required. We have
\begin{multline}
l(1)=(fh)(1)+f(1)+\alpha(h,h)(1)\stackrel{(\ref{fag}),(\ref{ag1g2})}{=}
f(h)+f(1)+\beta(\pi(1)\pi(h)^{-1},\pi(h)\pi(1)^{-1})\\
\stackrel{(\ref{tdef}),(\ref{pgh}),(\ref{pi1})}{=}f(1)+f(1)+\beta(h^{-1},h)\stackrel{(\ref{bdef})}{=}u\ne 0.
\end{multline}
The proof is complete. \end{proof}

Proposition \ref{mprop2} clearly implies Proposition \ref{mprop} and also answers affirmatively
the question in Problem \ref{prVDM}, because the Suzuki groups $Sz(q)$ have a unique conjugacy
class of involutions.

{\em Acknowledgment.}
This research was carried out within the State Contract of the Sobolev Institute of Mathematics (project FWNF-2022-0002).

\end{document}